\documentclass[11pt]{amsart}    

\usepackage{graphicx}
\usepackage{color}
 
\usepackage{amsfonts,amsmath,latexsym,amssymb,amsthm}
\usepackage{hyperref}
\usepackage{geometry}
\usepackage{txfonts}

\newtheorem{theorem}{Theorem}
\newtheorem{corollary}{Corollary}

\theoremstyle{remark}
\newtheorem{remark}{Remark}
\newtheorem{example}{Example}

\begin{document}

\title{Spectral geometry in a rotating frame: properties of the ground state}

\author{Diana Barseghyan}  

\address{Department of Mathematics, University of Ostrava,  30. dubna 22, 70103 Ostrava, Czech Republic \and
             Nuclear Physics Institute, Academy of Sciences of the Czech Republic, Hlavn\'{i} 130, 25068 \v{R}e\v{z} near Prague, Czech Republic}
\email{diana.barseghyan@osu.cz}            
 \author{Pavel Exner}
 
 \address{Nuclear Physics Institute, Academy of Sciences of the Czech Republic,
Hlavn\'{i} 130, 25068 \v{R}e\v{z} near Prague, Czech Republic \and 
Doppler Institute, Czech Technical University, B\v{r}ehov\'{a} 7, 11519 Prague, Czech Republic}
  \email{exner@ujf.cas.cz}

\keywords{Rotating quantum system; Dirichlet condition; Ground state eigenvalue; Optimalization; Comparison to a rotating disk}
\subjclass[2010]{35J15; 35P15; 81Q10}

\maketitle

\begin{abstract}
We investigate spectral properties of the operator describing a quantum particle confined to a planar domain $\Omega$ rotating around a fixed point with an angular velocity $\omega$ and demonstrate several properties of its principal eigenvalue $\lambda_1^\omega$. We show that as a function of rotating center position it attains a unique maximum and has no other extrema provided the said position is unrestricted. Furthermore, we show that as a function $\omega$, the eigenvalue attains a maximum at $\omega=0$, unique unless $\Omega$ has a full rotational symmetry. Finally, we present an upper bound to the difference $\lambda_{1,\Omega}^\omega - \lambda_{1,B}^\omega$ where the last named eigenvalue corresponds to a disk of the same area as $\Omega$
\end{abstract}

%%%%%%%%%%%%%%%%%%%%%%
\section{Introduction}
\label{s:intro}

The subject of this paper are spectral properties of a quantum particle confined to a planar region rotating around a fixed point. To motivate this task, recall that rotation is a frequent instrument to reveal properties of physical systems. Quantum effects associated with rotation attracted a particular attention in connection with properties of ultracold gases such as superfluidity, see \cite[Sec.~VII]{DBZ08}, \cite{RRD11}, or \cite{BCPY08,CPRY13,LS06} and references therein. These effects have many-body nature and their description can be reduced to nonlinear effective theory, but the one-particle Hamiltonians represent an important input in such a description. At the same time, there are situations when the motion of a single atom in a rotating trap is considered \cite{CGV18} as well as other interesting effects associated with rotation \cite{BB96}.

Naturally, the type of confinement is in these problems usually far from a hard-wall `container' being usually modeled by a suitable harmonic potential. Nevertheless, we believe that it is worth to investigate the problem indicated above from several reasons. On the physical side, experimental techniques develop rapidly and it is conceivable that sooner or later the model in question will become closer to experimental reality. At the same time, one can think of classical systems for which this would be a proper description. On the other hand, from the mathematical point of view it is interesting to see how would be results of the spectral geometry modified in the presence of rotation.

Let us describe our problem in more details. We suppose that $\Omega$ is a bounded region, i.e. regular open set in $\mathbb{R}^2$, without loss of generality we may assume that it is \emph{connected} because in the opposite case we can analyze the spectrum referring to each component separately. The main object of our interest is the operator
 %----------------%
\begin{equation}\label{H} H_\omega(x_0, y_0)=
-\Delta_D^\Omega+i\omega ((x-x_0)\partial_y-(y-y_0) \partial_x),\;\;\text{with}\;\,\omega>0,\:(x_0, y_0)\in\mathbb{R}^2,
\end{equation}
 %----------------%
where $\Delta_D^\Omega$ is the Dirichlet Laplacian on $\Omega$, the Hamiltonian of a particle in such a `container' rotating around the point $(x_0,y_0)$ with the angular velocity $\omega$ which, again without loss of generality, may be supposed to be positive.

Concerning the domain, one may start from the operator defined on $C_0^\infty(\Omega)$ by the right-hand side of \eqref{H}. It is straightforward to check that the associated quadratic form  coincides with the form of the operator
 %----------------%
$$
\hat{H}^{\mathrm{in}}_\omega(x_0, y_0)=\left(i \nabla+\frac{\omega}{2} \hat{A}\right)^2- \frac{\omega^2}{4}((x-x_0)^2+(y-y_0)^2), \;\; \hat{A}=(-y+y_0, x-x_0),
$$
 %----------------%
defined on the same space. In view of our hypothesis about $\Omega$ the corresponding operator is obviously bounded from below, hence it allows for Friedrichs extension. Its domain coincides with that of first term, and therefore in view of the regularity assumption this extension is
 %----------------%
$$
\hat{H}_\omega(x_0, y_0)=\left(i \nabla+\frac{\omega}{2} \hat{A}\right)^2- \frac{\omega^2}{4}((x-x_0)^2+(y-y_0)^2)
$$
 %----------------%
with the domain $\mathcal{H}^2(\Omega)\cap \mathcal{H}^1_0(\Omega)$ \cite[Sec.~1.1.2.2]{Ra17} which is also the natural domain of the operator \eqref{H}.

Furthermore, the vector potential $\frac12\omega\hat{A}$ gives rise to a homogeneous `magnetic field', and as a consequence,
$\hat{H}_\omega(x_0, y_0)$ is a by simple gauge transformation, $u(x,y) \mapsto u(x,y) \mathrm{e}^{-i\omega(xy_0-yx_0)/2}$, unitarily equivalent to
 %----------------%
\begin{equation}\label{equivalent}
\widetilde{H}_\omega(x_0, y_0)=\left(i \nabla+\frac{\omega}{2} A\right)^2- \frac{\omega^2}{4}((x-x_0)^2+(y-y_0)^2)
\end{equation}
 %----------------%
with $A:=(-y, x)$; in what follows we will work with both the operators \eqref{H} and \eqref{equivalent}. The boundedness of the second term at the right-hand side together with the `magnetic' form of the first also implies that the spectrum of $H_\omega(x_0, y_0)$ is purely discrete and the eigenvalues obey for any fixed $\omega>0$ and $(x_0, y_0)\in\mathbb{R}^2$ Weyl's law,
 %----------------%
\begin{equation}\label{weyl}
\lambda^\omega_n(x_0, y_0) = \frac{4\pi n}{|\Omega|}\, (1+o(1)) \quad\text{as}\;\; n\to\infty,
\end{equation}
 %----------------%
cf.~\cite{FLW09}. Our concern in this paper will be the principal eigenvalue $\lambda^\omega_1(x_0, y_0)$.

We note the operators $\widetilde{H}_\omega (x_0, y_0)$ form an analytic family in the sense of \cite[Chap.~7]{Ka80}. For the first term one can check it, say, as in \cite{EKP16}, the second is even simpler being a multiplication by a function which is analytic in $\omega,\, x_0,\, y_0$ and bounded (on $\Omega$). This means, in particular, that the eigenvalue $\lambda^\omega_1(x_0, y_0)$ is simple for small values of $\omega$ since it has the property for $\omega=0$. On the other hand, we shall see that this may not be the case in general.

Let us briefly describe the contents of the paper. First we will ask about optimization of $\lambda^\omega_1(x_0, y_0)$ with respect to the position of the rotation center. We will show that there is a unique maximum and no other critical points provided $(x_0, y_0)$ is unrestricted. Assuming in addition that $\Omega$ is convex or the rotation is slow, one can also say something about the location of the maximum. Next, in Section~\ref{s:omega-inequality}, we investigate $\lambda^\omega_1(x_0, y_0)$ as a function of the angular velocity, we show that it attains a maximum value when $\omega=0$ and that this maximum is unique unless $\Omega$ exhibits a full rotational symmetry with respect to the point $(x_0, y_0)$.

In Section~\ref{s:disk-comp} we compare for a fixed $\omega$ the principal eigenvalue to that of a disk $B$ of the same area as $\Omega$. In the standstill, $\omega=0$, the answer to the optimization question would be given by Faber-Krahn inequality, and since the first term in the operator \eqref{equivalent} can be written as the magnetic Dirichlet Laplacian, it is worth noting that for such an operator the disk is also an optimizer \cite{Er96}. It is the second term, however, that spoils the game; while for the first one departure from a circular form pushes the energy up, in the second one on the contrary an elongated shape means a larger negative contribution, hence the existence of an optimal shape is not \emph{a priori} clear. What we are able to demonstrate in this situation is an \emph{upper} bound on the difference $\lambda_{1,\Omega}^\omega(x_0, y_0) - \lambda_{1,B}^\omega(0,0)$ for convex regions expressed in terms of the geometry of $\Omega$ and the angular velocity. We conclude the paper by mentioned some open questions inspired by this analysis.

%%%%%%%%%%%%%%%%%%%%%%%%%%%%%%%%%%%%%%%%
\section{Rotation center optimalization}
\label{optimalization}

The first problem we are going to address concerns the dependence of the ground state energy on the position of center of rotation, i.e. the function $(x_0, y_0) \mapsto \lambda^\omega_1(x_0, y_0)$ for fixed $\Omega$ and $\omega$, in particular, the existence its extrema.

\subsection{Absence of minima and saddle points}
\label{ss:absence}

 %----------------%
\begin{theorem}\label{th1}
$\lambda^\omega_1(\cdot, \cdot)$  as a map $\mathbb{R}^2 \to \mathbb{R}$ has no minima. If it has a maximum, it is unique.
\end{theorem}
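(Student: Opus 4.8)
The plan is to exploit the fact that in the gauge-transformed operator $\widetilde{H}_\omega(x_0, y_0)$ of \eqref{equivalent} the magnetic term $(i\nabla + \frac{\omega}{2}A)^2$ with $A=(-y,x)$ does \emph{not} depend on the rotation center, so that the entire $(x_0,y_0)$-dependence resides in the scalar potential. Expanding the square,
\[
-\frac{\omega^2}{4}\bigl((x-x_0)^2 + (y-y_0)^2\bigr) = -\frac{\omega^2}{4}(x^2+y^2) + \frac{\omega^2}{2}(x_0 x + y_0 y) - \frac{\omega^2}{4}(x_0^2 + y_0^2),
\]
so that
\[
\widetilde{H}_\omega(x_0, y_0) = \widetilde{H}_\omega(0,0) + \frac{\omega^2}{2}(x_0 x + y_0 y) - \frac{\omega^2}{4}(x_0^2+y_0^2).
\]
Since the last term is a constant, it merely shifts the whole spectrum, and therefore
\[
\lambda^\omega_1(x_0, y_0) = \mu_1(x_0, y_0) - \frac{\omega^2}{4}(x_0^2+y_0^2),
\]
where $\mu_1(x_0, y_0)$ is the principal eigenvalue of $\widetilde{H}_\omega(0,0) + \frac{\omega^2}{2}(x_0 x + y_0 y)$, and where I use that the unitary gauge map preserves the eigenvalue.

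Next I would establish that $\mu_1$ is a \emph{concave} function of $(x_0,y_0)$. By the min-max characterization,
\[
\mu_1(x_0, y_0) = \inf_{\substack{u\in\mathcal{H}^1_0(\Omega)\\ \|u\|=1}} \left[\bigl\langle u, \widetilde{H}_\omega(0,0)\, u\bigr\rangle + \frac{\omega^2}{2} x_0 \langle u, x\, u\rangle + \frac{\omega^2}{2} y_0 \langle u, y\, u\rangle\right],
\]
and for each fixed admissible $u$ the bracketed expression is affine in $(x_0,y_0)$. A pointwise infimum of affine functions is concave, hence $\mu_1$ is concave on $\mathbb{R}^2$; finiteness follows from the discreteness and lower boundedness already recorded for the operator.

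The conclusion then comes from the quadratic correction. The term $-\frac{\omega^2}{4}(x_0^2+y_0^2)$ is strictly concave, its Hessian being $-\frac{\omega^2}{2}I$, which is negative definite for $\omega>0$; consequently $\lambda^\omega_1$ is the sum of a concave and a strictly concave function and is therefore strictly concave on $\mathbb{R}^2$. A strictly concave function admits no interior local minimum, since its restriction to any line through a candidate point is a strictly concave function of one variable, which cannot have an interior local minimum; likewise it has no saddle point, because any critical point of a strictly concave function is automatically its global maximum. Finally, if two distinct points were both global maxima, the strict midpoint inequality would force the value at their midpoint to exceed the maximal value, a contradiction, so the maximum is unique whenever it exists.

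The argument is essentially complete at this stage. The only step demanding care is the reduction in the first paragraph, namely the observation that after gauging the magnetic term becomes independent of $(x_0,y_0)$, which is precisely what converts the parameter dependence into a purely additive linear-plus-constant perturbation and makes the concavity machinery applicable; beyond stating this reduction and the min-max concavity cleanly I do not anticipate a serious obstacle.
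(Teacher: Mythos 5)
Your proof is correct, and it takes a genuinely different route from the paper's. You notice that in the gauge \eqref{equivalent} the entire $(x_0,y_0)$-dependence sits in the scalar potential, split it into a part affine in $(x_0,y_0)$ plus the explicit constant $-\frac{\omega^2}{4}(x_0^2+y_0^2)$, and conclude that $\lambda_1^\omega$ is a pointwise infimum of affine functions (hence concave) shifted by a strictly concave quadratic, so it is strictly concave on $\mathbb{R}^2$; the absence of minima and the uniqueness of a maximum are then immediate. The paper instead runs a stationary-point analysis: it invokes Rellich's analytic perturbation theory to choose eigenfunction branches satisfying \eqref{perturbation}, shows that a point where the first moments \eqref{1condition} are nonzero cannot be stationary, and that where they vanish the explicit term $-\omega^2h^2/4$ --- the same completed square underlying your decomposition --- forces a strict local maximum; uniqueness is then obtained by a one-dimensional argument along the segment joining two hypothetical maxima. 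Your approach buys simplicity and robustness: it needs no analytic eigenfunction branches and is untroubled by possible ground-state degeneracy (which Example~\ref{disk-ex} shows does occur), and it yields the stronger global statement of strict concavity, from which Corollary~\ref{max-ex} also follows once one has Corollary~\ref{fall}. What the paper's local machinery buys in exchange is the stationarity conditions \eqref{extr.} on the eigenfunctions, which are reused in the proofs of Theorem~\ref{th:convex} and the slow-rotation result, so it is not redundant in the larger scheme. Two small points of hygiene in your write-up: the pairing $\langle u,\widetilde{H}_\omega(0,0)u\rangle$ in your variational formula should be read as the quadratic form on the form domain $\mathcal{H}^1_0(\Omega)$ rather than an operator inner product, and finiteness of the infimum for each fixed $(x_0,y_0)$ rests on the boundedness of $x$ and $y$ on the bounded set $\Omega$ --- both of which you essentially acknowledge.
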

 %----------------%
\begin{proof}
We will use the form (\ref{equivalent}) of the operator. The idea is to look how $\lambda^\omega_1(x_0, y_0)$ behaves with respect to small shifts of the rotation center. We employ normalized eigenfunctions $u^{(x_0, y_0)}_\omega$ and $v^{(x_0, y_0)}_\omega$ corresponding to eigenvalue $\lambda^\omega_1(x_0, y_0)$ such that their perturbations satisfy the relations
 %----------------%
\begin{align} \label{perturbation}
\begin{split}
& u^{(x_0+t, y_0)}_\omega=u^{(x_0, y_0)}_\omega+\mathcal{O}(t),
\\
& v^{(x_0, y_0+s)}_\omega=v^{(x_0, y_0)}_\omega+\mathcal{O}(s),
\end{split}
\end{align}
 %----------------%
for small values of $t$ and $s$, where the error term is understood in the $L^\infty$ sense. The two functions could be identical but we do not know \emph{a priori} whether the eigenvalue $\lambda^\omega_1(x_0, y_0)$ is simple -- in fact we shall see in Example~\ref{disk-ex} below that this not true in general -- what is important is that such functions can be always found \cite[Sec.~II.5, Thm~3]{Re69} as long as the perturbation is analytic which is the case here as we have already mentioned.

Assume that  at least one of the following conditions holds:
 %----------------%
\begin{align} \label{1condition}
\begin{split}
& \int_\Omega (x-x_0) \, |u^{(x_0, y_0)}_\omega|^2\,\mathrm{d}x\,\mathrm{d}y\ne 0,
\\
& \int_\Omega (y-y_0) \, |v^{(x_0, y_0)}_\omega|^2\,\mathrm{d}x\,\mathrm{d}y\ne 0,
\end{split}
\end{align}
 %----------------%
then we are going to show that $(x_0, y_0)$ is not an extremum point. Suppose, for instance, that the first integral is positive, then using the minimax principle we get for any $h<0$ small enough
 %----------------%
\begin{align}
\nonumber & \lambda^\omega_1(x_0+h, y_0)\le \left(\widetilde{H}_\omega(x_0+h, y_0)u^{(x_0, y_0)}_\omega, u^{(x_0, y_0)}_\omega\right)\\ \nonumber & = \left(\widetilde{H}_\omega(x_0, y_0)u^{(x_0, y_0)}_\omega, u^{(x_0, y_0)}_\omega\right)+ \frac{\omega^2 h}{2} \int_\Omega (x- x_0 ) |u^{(x_0, y_0)}_\omega|^2\,\mathrm{d}x\,\mathrm{d}y+ \mathcal{O}(h^2)\\ \label{max} & <  \left(\widetilde{H}_\omega(x_0, y_0)u^{(x_0, y_0)}_\omega, u^{(x_0, y_0)}_\omega\right)=\lambda^\omega_1(x_0, y_0).
\end{align}
 %----------------%
On the other hand, the assumed positivity of the integral in combination with (\ref{perturbation}) means that
 %----------------%
$$
\int_\Omega (x-x_0) \, |u^{(x_0+t, y_0)}_\omega|^2\,\mathrm{d}x\,\mathrm{d}y> 0
$$
 %----------------%
holds for all sufficiently small $t$. Choosing the latter positive and small enough, we get
 %----------------%
\begin{align*}
& \lambda^\omega_1(x_0, y_0)\le \left(\widetilde{H}_\omega(x_0, y_0)u^{(x_0+t, y_0)}_\omega, u^{(x_0+t, y_0)}_\omega\right) \\ & = \left(\widetilde{H}_\omega(x_0+t , y_0 )u^{(x_0+t, y_0)}_\omega, u^{(x_0+t, y_0 )}_\omega\right) \\ & \quad -\frac{\omega^2 t}{2} \int_\Omega (x-x_0- t)\, |u^{(x_0 +t, y_0 )}_\omega|^2\,\mathrm{d}x\,\mathrm{d}y+\mathcal{O}(t^2) \\ & < \left(\widetilde{H}_\omega(x_0+t, y_0 )u^{(x_0 +t, y_0)}_\omega, u^{(x_0+t, y_0)}_\omega\right)=\lambda^\omega_1(x_0 +t, y_0).
\end{align*}
 %----------------%
A similar argument applies if the integral is negative, which together with (\ref{max}) shows that $x_0$ is not a stationary point of the function $\lambda^\omega_1(\cdot, y_0)$. Repeating the reasoning with the other variable, we conclude that if $\lambda^\omega_1(\cdot,\cdot)$ has a stationary point $(x_0, y_0)$, there exist eigenfunctions $u^{(x_0, y_0)}_\omega$ and $v^{(x_0, y_0)}_\omega$, possibly equal each other, which satisfy
 %----------------%
\begin{align} \label{extr.}
\begin{split}
& \int_\Omega (x-x_0) \, |u^{(x_0, y_0)}_\omega|^2\,\mathrm{d}x\,\mathrm{d}y = 0,
\\
& \int_\Omega (y-y_0) \, |v^{(x_0, y_0)}_\omega|^2\,\mathrm{d}x\,\mathrm{d}y = 0.
\end{split}
\end{align}
 %----------------%
In such a case we have for all nonzero and sufficiently small $h$
 %----------------%
\begin{align*}
& \lambda^\omega_1(x_0+h, y_0)\le \left(\widetilde{H}_\omega(x_0+h, y_0)u^{(x_0, y_0)}_\omega, u^{(x_0, y_0)}_\omega\right) \\ & =  \left(\widetilde{H}_\omega(x_0, y_0)u^{(x_0, y_0)}_\omega, u^{(x_0, y_0)}_\omega\right)-\frac{\omega^2 h^2}{4}<\lambda^\omega_1(x_0, y_0)
\end{align*}
 %----------------%
and an analogous inequality in the other variable which means that $(x_0, y_0)$ is a point of maximum. This proves the first claim of the theorem.

To establish the second one, we note two things. First of all, the above argument shows that a maximum of the function $\lambda^\omega_1(\cdot,\cdot)$ is an isolated point. Furthermore, the previous reasoning can apply not only in parallel with the axes but in any direction. Suppose thus that the function $\lambda^\omega_1(\cdot,\cdot)$ has more than one maximum point. We pick two of them, and since the spectral properties are obviously invariant with respect to simultaneous Euclidean transformations of all the coordinates, we may place those without loss of generality to the points $(0, 0)$ and $(t_0, bt_0)$ for some $t_0\ne 0$ and $b>0$. Since the function $t\mapsto \lambda_1^\omega (t,bt)$ is continuous it must have a point of minimum at some $t_1\in (0,t_0)$.

As before there is an eigenfunction $u^{(t,bt)}_\omega$ corresponding to $\lambda_1^\omega (t,bt)$ such that $u^{(t,bt)} _\omega =u^{(t_1, bt_1)}_\omega+\mathcal{O}(t-t_1)$ holds for small $|t-t_1|$. In a similar way as above we check that if
 %----------------%
$$
\int_\Omega \big(x+by-(1+b^2)t_1\big)\, |u^{(t,bt)}_\omega|^2\,\mathrm{d}x\,\mathrm{d}y\ne 0,
$$
 %----------------%
the point $(t_1,bt_1)$ cannot be a stationary point of $\lambda_1^\omega (\cdot,b\,\cdot)$, while the integral vanishes we have
 %----------------%
\begin{align*}
& \lambda^\omega_1(t,bt)\le \left(\widetilde{H}_\omega(t,bt)u^{(t_1,bt_1)}_\omega, u^{(t_1, bt_1)}_\omega\right) \\ & = \left(\widetilde{H}_\omega(t_1, bt_1)u^{t_1,bt_1)}_\omega, u^{(t_1,bt_1)}_\omega\right)-\frac{\omega^2 h^2}{4}\,(1+b^2) (t-t_1)^2<\lambda^\omega_1(t_1,bt_1),
\end{align*}
 %----------------%
which contradicts the assumption that $(t_1,bt_1)$ is a minimum of $\lambda_1^\omega (\cdot,b\,\cdot)$. In this way the proof is concluded. \qed
\end{proof}

In fact, the claim of the theorem can be strengthened in different ways:

 %----------------%
\begin{corollary}\label{fall}
$\lambda_1^\omega(x_0, y_0)\to -\infty$ holds as $(x_0, y_0)\to \infty$.
\end{corollary}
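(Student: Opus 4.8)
The plan is to read off the result directly from the variational characterization of the principal eigenvalue, exploiting the fact that in the gauge-transformed form \eqref{equivalent} the rotation center enters \emph{only} through the harmonic potential term. First I would fix, once and for all, a single normalized trial function $u\in C_0^\infty(\Omega)$ with $\|u\|=1$, chosen independently of $(x_0,y_0)$. Applying the minimax principle to $\widetilde{H}_\omega(x_0,y_0)$ gives
\[
\lambda_1^\omega(x_0,y_0)\le \Big\|\big(i\nabla+\tfrac{\omega}{2}A\big)u\Big\|^2
-\frac{\omega^2}{4}\int_\Omega\big((x-x_0)^2+(y-y_0)^2\big)\,|u|^2\,\mathrm{d}x\,\mathrm{d}y.
\]
The crucial observation is that, since $A=(-y,x)$ carries no dependence on the center, the magnetic kinetic term on the right-hand side is a \emph{constant} $C$ independent of $(x_0,y_0)$; all the center-dependence sits in the second, manifestly negative term.

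Next I would estimate this potential term from above using the boundedness of $\Omega$. Since $\Omega$ is a bounded region there is $R>0$ with $\Omega\subset B_R(0)$, so that for every $(x,y)\in\Omega$ and every center with $|(x_0,y_0)|>R$ we have the elementary lower bound $(x-x_0)^2+(y-y_0)^2\ge\big(|(x_0,y_0)|-R\big)^2$. Combining this with $\|u\|=1$ yields
\[
\lambda_1^\omega(x_0,y_0)\le C-\frac{\omega^2}{4}\big(|(x_0,y_0)|-R\big)^2,
\]
whose right-hand side tends to $-\infty$ as $|(x_0,y_0)|\to\infty$, which is exactly the assertion.

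I do not expect any genuine obstacle here: the whole argument is a one-line consequence of the minimax principle, the point being simply that a fixed test function does not feel the shift of the rotation center in its (fixed) magnetic kinetic energy, whereas the harmonic potential it sees grows quadratically with the distance of the center from $\Omega$. The only mild care needed is to work in the unitarily equivalent form \eqref{equivalent} rather than \eqref{H}, so that the potential term is real and explicit and the variational estimate is transparent; the gauge transformation preserves the spectrum, so the conclusion transfers back to $\lambda_1^\omega(x_0,y_0)$ without change.
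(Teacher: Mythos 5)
Your argument is correct and is essentially the paper's own proof: both apply the minimax principle with a single fixed trial function in the gauge-transformed form \eqref{equivalent}, observe that the magnetic kinetic term is independent of $(x_0,y_0)$, and note that the negative harmonic term diverges as the center recedes. Your version merely makes the divergence quantitative via the bound $(x-x_0)^2+(y-y_0)^2\ge(|(x_0,y_0)|-R)^2$, which the paper leaves implicit.
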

 %----------------%
\begin{proof}
By minimax principle we have
 %----------------%
$$
\lambda_1^\omega(x_0, y_0)\le \int_\Omega \left|i \nabla u +\frac{\omega}{2} A u\right|^2\,\mathrm{d}x\,\mathrm{d}y-
\frac{\omega^2}{4} \int_\Omega ((x-x_0)^2+ (y-y_0)^2) |u|^2\,\mathrm{d}x\,\mathrm{d}y
$$
 %----------------%
for a fixed function $u\in \mathcal{H}_0^1(\Omega)\cap \mathcal{H}^2(\Omega)$ with first term independent of $(x_0, y_0)$,
while the negative second diverges as $(x_0, y_0)\to \infty$. \qed \end{proof} 

Furthermore, since the dependence on the rotation center position is continuous, in fact analytic, we can also make the following claim:

 %----------------%
\begin{corollary}\label{max-ex}
the function $\lambda^\omega_1(\cdot, \cdot)$ as a map $\mathbb{R}^2 \to \mathbb{R}$ has a unique maximum.
\end{corollary}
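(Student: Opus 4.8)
The plan is to combine the two preceding results: Theorem~\ref{th1} already guarantees uniqueness of a maximum whenever one exists, so the only thing left to establish is \emph{existence} of a maximum, and this is a short compactness argument once Corollary~\ref{fall} is invoked.

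First I would fix an arbitrary reference point, most conveniently the origin, and set $c:=\lambda_1^\omega(0,0)$. By Corollary~\ref{fall} the function $\lambda_1^\omega(\cdot,\cdot)$ tends to $-\infty$ as $(x_0,y_0)\to\infty$, so there exists a radius $R>0$ such that $\lambda_1^\omega(x_0,y_0)<c$ holds whenever $x_0^2+y_0^2>R^2$. Consequently the supremum of $\lambda_1^\omega$ over $\mathbb{R}^2$ coincides with its supremum over the closed disk $K$ of radius $R$ centered at the origin, which in particular contains the origin itself.

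Next I would use continuity: as noted just before the statement, the map $(x_0,y_0)\mapsto\lambda_1^\omega(x_0,y_0)$ is analytic where the ground state is simple and is in any case continuous, being the lowest eigenvalue of an analytic family of operators. A continuous function on the compact set $K$ attains its maximum at some point $p^\ast\in K$, and since $\lambda_1^\omega(p^\ast)\ge\lambda_1^\omega(0,0)=c$ while every point outside $K$ carries a value strictly below $c$, the point $p^\ast$ is a global maximum of $\lambda_1^\omega$ on all of $\mathbb{R}^2$. Finally, invoking the second assertion of Theorem~\ref{th1}, this maximum is unique, which is exactly the claim.

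I do not expect any genuine obstacle here, since the whole content is carried by the decay at infinity (Corollary~\ref{fall}) together with the uniqueness statement (Theorem~\ref{th1}), and the existence step reduces to the elementary fact that a continuous function escaping to $-\infty$ attains its supremum on a compact set. If anything warrants a word of care, it is the continuity of $\lambda_1^\omega$ at points where the ground state is degenerate; but this is covered by the standard perturbation theory for the lowest eigenvalue of the analytic family that was already used in the proof of Theorem~\ref{th1}.
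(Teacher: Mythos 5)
Your argument is correct and is precisely the one the paper intends: the text justifies the corollary in a single sentence by invoking continuity (in fact analyticity) of $(x_0,y_0)\mapsto\lambda_1^\omega(x_0,y_0)$ together with the divergence to $-\infty$ from Corollary~\ref{fall} and the uniqueness part of Theorem~\ref{th1}. You have simply spelled out the standard compactness step that the paper leaves implicit, including the appropriate caveat about continuity at points of degeneracy.
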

 %----------------%

\begin{remark}\label{inner}
The problem considered so far in which the placement of the rotation center is not restricted could be called \emph{free}. One can also consider the \emph{inner} problem where one requires that $(x_0,y_0)\in\bar\Omega$. In that case the minimum of $\lambda^\omega_1(\cdot, \cdot)$ exists and follows from Theorem~\ref{th1} that it is reached at the boundary; it may not be unique as the disk example discussed below shows. The maximum can occur either in a point of $\Omega$ in which case it is unique or at $\partial\Omega$ when it again may not be unique. 
\end{remark}

\subsection{Convex sets}
\label{ss:convex sets}

For a particular class of regions $\Omega$ the difference between optimization result for the free and inner problem mentioned above is absent.
 %----------------%
\begin{theorem} \label{th:convex}
Let $\Omega$ be convex, then the ground state eigenvalue of $H_\omega(x_0, y_0)$ reaches its maximum at a point $(x_0, y_0)\in\Omega$.
\end{theorem}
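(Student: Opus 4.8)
The plan is to separate existence from location: Corollary~\ref{max-ex} already guarantees that $\lambda_1^\omega(\cdot,\cdot)$ has a unique maximizer, call it $c^\ast=(x_0^\ast,y_0^\ast)$, so it remains only to show that placing the rotation center outside $\Omega$ is incompatible with the stationarity enjoyed at a maximum, once convexity is used. I would work with the form \eqref{equivalent} and, for a unit vector $e\in\mathbb{R}^2$, record the exact identity
\begin{equation*}
\widetilde{H}_\omega(c^\ast+he)=\widetilde{H}_\omega(c^\ast)+\frac{\omega^2 h}{2}\,g_e-\frac{\omega^2 h^2}{4},\qquad g_e(x,y):=\big\langle (x,y)-c^\ast,\,e\big\rangle,
\end{equation*}
obtained by expanding the quadratic potential exactly as in the derivation of \eqref{max}.

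First I would extend the stationarity analysis of Theorem~\ref{th1} from the coordinate axes, where it produces the relations \eqref{extr.}, to an arbitrary direction $e$ — a step the proof of that theorem already signals is admissible. Concretely, feeding the branch eigenfunction $u_e^{(c^\ast+he)}=u_e^{(c^\ast)}+\mathcal{O}(h)$, available by analytic perturbation theory even when $\lambda_1^\omega$ is degenerate, into the two matching variational comparisons of Theorem~\ref{th1} shows that $h\mapsto\lambda_1^\omega(c^\ast+he)$ is differentiable at $h=0$ with derivative $\tfrac{\omega^2}{2}\int_\Omega g_e\,|u_e^{(c^\ast)}|^2\,\mathrm{d}x\,\mathrm{d}y$. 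Since $c^\ast$ maximizes $\lambda_1^\omega$ this derivative must vanish, so for \emph{every} direction $e$ there is an eigenfunction $u_e:=u_e^{(c^\ast)}$ at the level $\lambda_1^\omega(c^\ast)$ with
\begin{equation}\label{dir-stationarity}
\int_\Omega g_e(x,y)\,\big|u_e(x,y)\big|^2\,\mathrm{d}x\,\mathrm{d}y=0 .
\end{equation}

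Next comes the geometric heart of the argument. Suppose, for contradiction, that $c^\ast\notin\Omega$. Because $\Omega$ is open and convex, the separation theorem for a point and an open convex set supplies a unit vector $e$ with $\langle p,e\rangle<\langle c^\ast,e\rangle$ for every $p\in\Omega$, that is, $g_e<0$ pointwise on $\Omega$; the strict inequality persists even when $c^\ast\in\partial\Omega$, since an open convex set cannot attain the supremum of a linear functional along its supporting line. Choosing this $e$ in \eqref{dir-stationarity} makes the integrand strictly negative throughout $\Omega$, and as $u_e$ is normalized, hence nonzero on a set of positive measure, the integral is strictly negative. This contradicts \eqref{dir-stationarity}, so $c^\ast\in\Omega$, which is the assertion.

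The step I expect to be the main obstacle is the directional stationarity \eqref{dir-stationarity}: a bare minimax estimate only bounds $\lambda_1^\omega$ from above, so isolating the first-order coefficient and forcing it to vanish requires the \emph{two-sided} comparison — the second, lower estimate using the perturbed eigenfunction — exactly as in the more delicate half of the proof of Theorem~\ref{th1}. One has to ensure that the selected branch $u_e^{(c^\ast+he)}$ depends analytically on $h$, so that $|u_e^{(c^\ast+he)}|^2=|u_e^{(c^\ast)}|^2+\mathcal{O}(h)$ uniformly on $\Omega$; this is what permits the degenerate case to be treated and the derivative to be read off. By comparison, the convex-separation portion is routine.
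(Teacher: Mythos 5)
Your argument is correct and is essentially the paper's own proof in slightly different clothing: the directional stationarity condition \eqref{dir-stationarity} is exactly the paper's conditions \eqref{extr.}/\eqref{projection} read in a rotated coordinate frame, and your use of the separating hyperplane theorem is precisely the paper's step of rotating the axes so that the projection condition fails for a point outside the open convex set. Your write-up is somewhat more explicit about why the stationarity holds in every direction (the two-sided comparison from Theorem~\ref{th1}) and about the boundary case, but the route is the same.
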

 %----------------%
\begin{proof}
One can consider the operator in the two unitarily equivalent forms, either \eqref{H} which which can be using the polar coordinates $(r,\varphi)$ written also as
 %----------------%
\begin{equation}\label{Hpolar}
H_\omega(x_0, y_0)=-\Delta_D^\Omega +i \omega \partial_\varphi,
\end{equation}
 %----------------%
or (\ref{equivalent}); in both of them it is obvious that spectral properties are invariant with respect to translation and rotations of the coordinate system. We know that the maximum of $\lambda^\omega_1(\cdot, \cdot)$ exists and the conditions (\ref{extr.}) applied to the eigenfunctions $u_\omega^{(x_0, y_0)}$ and $v_\omega^{(x_0, y_0)}$ we conclude that
 %----------------%
\begin{equation} \label{projection}
x_0\in \mathrm{Pr}_X(\Omega),\quad y_0\in \mathrm{Pr}_Y(\Omega),
\end{equation}
 %----------------%
projections of $\Omega$ to the axes. As (an open) convex set $\Omega$ is star-shaped with respect to any of its points, in other words, one can identify such a point with the origin of the coordinates and write $\Omega= \{r\in[0,R(\varphi)),\,\varphi\in[0,2\pi)\}$ for a suitable $2\pi$-periodic function $R$. Assume that $(x_0, y_0)\not\in\Omega$, then one can always rotate the coordinate system in such a way that at least one of the conditions \eqref{projection} with respect to the rotated axes $X',\,Y'$ would be violated -- the point would appear either outside the segment or at its boundary -- which contradicts the mentioned rotational invariance. \qed
\end{proof}

\subsection{Slow rotation}
\label{ss:slow}

If assume in addition that the angular velocity $\omega$ is small one located the position of the maximum more precisely in a way reminiscent of \cite{HKK01,EM08}. To this aim, we need a notation: given a region $\Sigma\subset\mathbb{R}^2$ and a line $P$, we
denote by $\Sigma^P$ the mirror image of $\Sigma$ with respect to $P$.
 %----------------%
\begin{theorem}
Let $\Omega$ be convex set and $P$ be a line which divides $\Omega$ into two parts, $\Omega_1$ and $\Omega_2$, in such a way that
$\Omega_1^P\subset \Omega_2$. Then for small enough values of $\omega$ the point at which $\lambda^\omega_1(x_0, y_0)$ attains its maximum does not belong to $\Omega_1$.
\end{theorem}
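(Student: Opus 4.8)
The plan is to reduce the statement, for small $\omega$, to a property of the centroid of the Dirichlet ground state, and then to establish that property by a reflection comparison in the spirit of \cite{HKK01,EM08}. After a Euclidean transformation we may assume that $P$ is the $y$-axis and that $\Omega_1=\Omega\cap\{x<0\}$, so that the hypothesis reads $\Omega_1^P\subset\Omega_2$ with $\Omega_2=\Omega\cap\{x>0\}$. First I would recall the gradient identity already implicit in the proof of Theorem~\ref{th1}: working in the gauge \eqref{equivalent}, where only the potential term depends on the centre, the minimax (Feynman--Hellmann) computation carried out there gives
\begin{equation}\label{grad}
\nabla_{(x_0,y_0)}\lambda^\omega_1(x_0,y_0)=\frac{\omega^2}{2}\left(\int_\Omega (x,y)\,|u^{(x_0,y_0)}_\omega|^2\,\mathrm{d}x\,\mathrm{d}y-(x_0,y_0)\right).
\end{equation}
Thus the maximiser $c_\ast=c_\ast(\omega)$, unique by Corollary~\ref{max-ex} and interior by Theorem~\ref{th:convex}, is the point where the centre of rotation coincides with the centroid of the ground-state density. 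By analyticity of the family and the uniform spectral gap one has $u^{c}_\omega\to\psi$ uniformly for $c\in\bar\Omega$ as $\omega\to0$, where $\psi>0$ is the normalised Dirichlet ground state of $\Omega$ (carrying no centre dependence at $\omega=0$). Feeding this into \eqref{grad} at $c=c_\ast$ yields $c_\ast=\bar r+\mathcal O(\omega)$ with $\bar r=\int_\Omega (x,y)\,\psi^2\,\mathrm{d}x\,\mathrm{d}y$ the centroid of $\psi^2$, so it suffices to prove $\int_\Omega x\,\psi^2\,\mathrm{d}x\,\mathrm{d}y\ge0$, with strict inequality unless $\Omega$ is symmetric about $P$.

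The heart of the matter is a reflection comparison for $\psi$. On $\Omega_1$ set $\hat\psi(x,y):=\psi(-x,y)$; this is well defined because $(-x,y)\in\Omega_1^P\subset\Omega_2\subset\Omega$ whenever $(x,y)\in\Omega_1$. Both $\psi$ and $\hat\psi$ solve $-\Delta f=\lambda^0_1 f$ on $\Omega_1$, the Laplacian being reflection invariant, so $w:=\psi-\hat\psi$ satisfies $(-\Delta-\lambda^0_1)w=0$ in $\Omega_1$, where $\lambda^0_1$ is the principal Dirichlet eigenvalue of $\Omega$. On the part of $\partial\Omega_1$ lying on $P$ one has $x=0$, hence $w=0$, while on $\partial\Omega_1\cap\partial\Omega$ one has $\psi=0$ and $\hat\psi\ge0$, so $w\le0$ on all of $\partial\Omega_1$. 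The decisive point is that, by strict domain monotonicity, $\lambda^0_1$ lies strictly below the principal Dirichlet eigenvalue of the proper subdomain $\Omega_1$; hence $-\Delta-\lambda^0_1$ on $\Omega_1$ has positive principal eigenvalue and obeys the generalised maximum principle, so that $w\le0$ throughout $\Omega_1$, i.e.
\begin{equation}\label{refl}
\psi(x,y)\le\psi(-x,y)\qquad\text{for all }(x,y)\in\Omega_1 .
\end{equation}

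With \eqref{refl} in hand the sign of the centroid follows by pairing $\Omega_1$ with its mirror image. Splitting $\int_\Omega x\,\psi^2=\int_{\Omega_1}+\int_{\Omega_1^P}+\int_{\Omega_2\setminus\Omega_1^P}$ and changing variables $x\mapsto-x$ in the middle integral turns the first two contributions into
$$
\int_{\Omega_1} x\big(\psi^2(x,y)-\psi^2(-x,y)\big)\,\mathrm{d}x\,\mathrm{d}y ,
$$
whose integrand is nonnegative since $x<0$ on $\Omega_1$ while $\psi^2(x,y)-\psi^2(-x,y)\le0$ by \eqref{refl}; the integral over $\Omega_2\setminus\Omega_1^P$ is nonnegative because there $x>0$. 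Hence $\int_\Omega x\,\psi^2\ge0$, and tracing equality forces $w\equiv0$ and $|\Omega_2\setminus\Omega_1^P|=0$, i.e. full symmetry of $\Omega$ about $P$. This settles the theorem in both cases: in the symmetric case $\lambda^\omega_1(\cdot,\cdot)$ is itself symmetric about $P$ and its unique maximiser must lie on $P$, while in the non-symmetric case $\int_\Omega x\,\psi^2>0$ together with $c_\ast=\bar r+\mathcal O(\omega)$ gives a maximiser with strictly positive first coordinate for all small $\omega$; either way $c_\ast\notin\Omega_1\subset\{x<0\}$.

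The main obstacle I expect is the comparison step \eqref{refl}: it rests on the maximum principle continuing to hold for $-\Delta-\lambda^0_1$ on $\Omega_1$, which is true only because $\Omega_1$ is a proper subdomain and strict domain monotonicity places $\lambda^0_1$ below the bottom of the Dirichlet spectrum on $\Omega_1$ — here convexity is convenient in ensuring that $\Omega_1,\Omega_2$ are connected Lipschitz pieces and that $c_\ast$ is interior. A secondary point needing care is the uniform-in-$c$ smallness of $u^c_\omega-\psi$ required to pass from the exact relation \eqref{grad} to $c_\ast=\bar r+\mathcal O(\omega)$, together with the separate treatment of the degenerate symmetric case in which \eqref{refl} is not strict.
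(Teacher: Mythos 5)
Your argument is correct, and its analytic core --- the reflection inequality $\psi(x,y)\le\psi(-x,y)$ on $\Omega_1$, obtained by applying the generalised maximum principle to $w=\psi-\hat\psi$ for the operator $-\Delta-\lambda_1^D(\Omega)$ on the proper subdomain $\Omega_1$ --- is exactly the paper's inequality \eqref{negativeness}, taken from \cite{HKK01}. Where you genuinely diverge is in how this is converted into a statement about the maximiser. The paper reflects about the line parallel to $P$ passing through an \emph{arbitrary} candidate point $(x_0,y_0)\in\Omega_1$ and shows $\int_\Omega(x-x_0)u_D^2\,\mathrm{d}x\,\mathrm{d}y>0$ there, so that for small $\omega$ the stationarity condition \eqref{extr.}, which must hold at the interior maximum by Theorem~\ref{th:convex}, fails throughout $\Omega_1$; this implicitly uses a moving-planes observation that for convex $\Omega$ the containment hypothesis persists for all parallel lines deeper in $\Omega_1$. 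You reflect only about $P$ itself, deduce that the centroid $\bar r$ of $u_D^2$ has nonnegative first coordinate (strictly positive unless $\Omega$ is symmetric about $P$), and combine this with the Feynman--Hellmann characterisation of the maximiser as the fixed point $c_*=\int_\Omega(x,y)\,|u^{c_*}_\omega|^2\,\mathrm{d}x\,\mathrm{d}y$ together with $c_*=\bar r+\mathcal O(\omega)$. Your route avoids the moving-planes step and handles the symmetric case cleanly (where the paper's lower bound on the first moment degenerates as the candidate point approaches $P$, so the required smallness of $\omega$ is not obviously uniform up to the boundary of $\Omega_1$), at the price of requiring the uniform-in-$c$ eigenfunction expansion and the reflection symmetry $\lambda_1^\omega(x_0,y_0)=\lambda_1^\omega(-x_0,y_0)$; note that the latter holds only after composing the spatial reflection with complex conjugation, since reflection alone maps $H_\omega$ to $H_{-\omega}$. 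Both arguments are sound; yours has the added benefit of locating the maximiser to first order at the centroid of the Dirichlet ground-state density.
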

 %----------------%
\begin{proof}
Using once more the invariance of the spectrum with respect to rotations, we may suppose that $P$ is parallel to the $Y$ axis.
Consider first the situation when the rotation is absent, $\omega=0$. Let $u_D$ be the ground state eigenfunction of the Dirichlet Laplacian, $-\Delta^D_\Omega u_D= \lambda_1^D u_D$. We employ the idea introduced in \cite{HKK01} and define the function
 %----------------%
$$
v(x, y):=u_D(x, y)- u_D(x^P, y) \quad \text{on}\;\; \Omega_1,
$$
 %----------------%
where $(x^P, y)$ is the mirror image of $(x, y)$ with respect to $P$; since $\Omega_1^P\subset \Omega_2$ the function is well defined. The ground state eigenfunction of the Dirichlet Laplacian can be chosen to be positive, hence we have $v|_{\partial \Omega_1}\le 0$. This fact together with the equation $-\Delta v= \lambda_1^D v$ on $\Omega_1$ and the maximum principle for the second order elliptic partial differential equations means that $v< 0$ holds on $\Omega_1$, in other words
 %----------------%
\begin{equation}\label{negativeness}
u_D(x, y)\le u_D(x^P, y),\quad (x, y)\in \Omega_1.
\end{equation}
 %----------------%
Now we take an arbitrary point $(x_0,y_0)\in\Omega_1$ and assume that $P$ passes through it. Using (\ref{negativeness}) we find
 %----------------%
\begin{align}
& \nonumber \int_\Omega (x-x_0) (u_D (x, y))^2\,\mathrm{d}x\,\mathrm{d}y=\int_{\Omega^\backslash\left (\Omega_1\cup\Omega_1^P\right)} (x-x_0) (u_D (x, y))^2\,\mathrm{d}x\,\mathrm{d}y \\ & \nonumber \quad + \int_{\Omega_1}(x-x_0) (u_D(x, y))^2\,\mathrm{d}x\,\mathrm{d}y + \int_{\Omega_1^P} (x-x_0) (u_D(x, y)|)^2\,\mathrm{d}x\,\mathrm{d}y \\ & \label{neg.} > \int_{\Omega^\backslash\left (\Omega_1\cup \Omega_1^P\right)} (x-x_0) (u_D (x, y))^2\,\mathrm{d}x\,\mathrm{d}y > 0.
\end{align}
 %----------------%
So far the point $(x_0,y_0)\in\Omega_1$ was arbitrary, now we replace $-\Delta^D_\Omega$ by the operator \eqref{H} and assume it $(x_0,y_0)$ is the point when the maximum is attained. Our operators form a type A analytic family \cite{Ka80} with respect the parameters. This means, in particular, that the ground state eigenvalue $\lambda^\omega_1(x_0, y_0)$ is simple for all sufficiently small $\omega$ and the corresponding eigenfunction satisfies
 %----------------%
$$
u^\omega(x_0, y_0)(x, y)=u_D(x, y) + \mathcal{O}(\omega)
$$
 %----------------%
with the error understood in the $L^\infty$ sense. In combination with inequality (\ref{neg.}) and Theorem~\ref{th:convex} this leads to a contradiction with the condition (\ref{extr.}) concluding thus the proof. \qed
\end{proof}

%%%%%%%%%%%%%%%%%%%%%%%%%%%%%%%%%
\section{Standstill costs energy}
\label{s:omega-inequality}

Our next topic is to compare the ground state eigenvalue of $H_\omega(x_0, y_0)$ in the situation with and without the rotation. Specifically, we are going to prove the following claim:

 %----------------%
\begin{theorem}\label{th:omega-inequality}
Under the stated assumptions we have
 %----------------%
\begin{equation} \label{omega-inequality}
\lambda^\omega_1(x_0, y_0) \le \lambda_1^D(\Omega),
\end{equation}
 %----------------%
where $\lambda_1^D(\Omega)$ is the ground state eigenvalue of the Dirichlet Laplacian $-\Delta_D^\Omega$ on $\Omega$. Moreover, the inequality is sharp for $\omega>0$ provided the region $\Omega$ does not have full rotational symmetry being a disk or a circular annulus rotating around its center.
\end{theorem}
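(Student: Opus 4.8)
The plan is to obtain \eqref{omega-inequality} from a single variational test with the Dirichlet ground state, and then to read off the symmetry condition from the case of equality. I would work with the polar form \eqref{Hpolar}, $H_\omega(x_0,y_0)=-\Delta_D^\Omega+i\omega\,\partial_\varphi$, where $\varphi$ is the angle measured from the rotation center $(x_0,y_0)$, and denote by $u_D$ the real, positive, normalized ground state of $-\Delta_D^\Omega$. Since $u_D\in\mathcal{H}^2(\Omega)\cap\mathcal{H}_0^1(\Omega)$ lies in the operator domain, the minimax principle gives $\lambda_1^\omega(x_0,y_0)\le\langle u_D,H_\omega u_D\rangle = \lambda_1^D(\Omega)+i\omega\int_\Omega u_D\,\partial_\varphi u_D\,\mathrm{d}x\,\mathrm{d}y$.

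First I would show that the rotational term drops out. Writing $u_D\,\partial_\varphi u_D=\tfrac12\,\partial_\varphi(u_D^2)$ and noting that $\partial_\varphi$ acts as the divergence-free vector field $W=-(y-y_0)\partial_x+(x-x_0)\partial_y$, Green's theorem gives $\int_\Omega\partial_\varphi(u_D^2)=\int_{\partial\Omega}u_D^2\,(W\cdot n)\,\mathrm{d}s=0$ because $u_D$ vanishes on $\partial\Omega$; equivalently, $\int_\Omega u_D\,\partial_\varphi u_D$ is real, so the purely imaginary quantity $i\omega$ times it can agree with the necessarily real form value only if it vanishes. This leaves $\langle u_D,H_\omega u_D\rangle=\lambda_1^D(\Omega)$, which is \eqref{omega-inequality}.

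For the strictness statement I would argue by contraposition, assuming that equality $\lambda_1^\omega(x_0,y_0)=\lambda_1^D(\Omega)$ holds for some $\omega>0$. Then $u_D$ attains the minimum of the Rayleigh quotient of $H_\omega$, hence is a ground-state eigenfunction, $H_\omega(x_0,y_0)u_D=\lambda_1^D(\Omega)u_D$; subtracting $-\Delta_D^\Omega u_D=\lambda_1^D(\Omega)u_D$ leaves $i\omega\,\partial_\varphi u_D=0$, so $u_D$ is radial about $(x_0,y_0)$, say $u_D=f(r)$. Note that degeneracy of $\lambda_1^\omega$ is harmless here, since being a minimizer already forces the eigenfunction equation.

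The last step, turning radial symmetry of $u_D$ into rotational symmetry of $\Omega$, is the one I expect to be delicate. Because $\Omega$ is connected, $u_D$ is strictly positive in the interior (maximum principle) and vanishes continuously on $\partial\Omega$. If $\Omega$ were not invariant under rotations about $(x_0,y_0)$, some circle $\{r=r_0\}$ would meet both $\Omega$ and its complement, and being connected it would then meet $\partial\Omega$ as well; the boundary point forces $f(r_0)=0$ while the interior point forces $f(r_0)=u_D>0$, a contradiction. Hence $\Omega$ is rotationally invariant, and connectedness restricts it to a disk (center inside $\Omega$) or a circular annulus (center in the hole) centered at $(x_0,y_0)$. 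The hard part will be handling the boundary regularity carefully enough to justify the strict positivity and the continuity up to $\partial\Omega$; with that in hand, the contrapositive delivers the strict inequality whenever $\Omega$ lacks full rotational symmetry.
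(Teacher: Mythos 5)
Your proof of the inequality itself is the same one-line variational test with $u_D$ that the paper uses, but your proof of sharpness takes a genuinely different and, in fact, more direct route. The paper works perturbatively at $\omega=0$: it expands $\lambda_1^\omega=\lambda_1^D+a\omega+b\omega^2+\mathcal{O}(\omega^3)$, shows $a=0$ and $b<0$ (the latter by arguing that $b=0$ would force $(x-x_0)(u_D)_y-(y-y_0)(u_D)_x=0$, hence $u_D$ radial, hence $u_D$ vanishing on an open set by non-symmetry of $\Omega$, contradicting unique continuation), and then still needs a separate global argument --- locating a putative interior minimum of $\omega\mapsto\lambda_1^\omega$ and deriving a contradiction --- to upgrade the strict local maximum at $\omega=0$ to strict inequality for all $\omega>0$. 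You instead exploit rigidity in the equality case: if $\lambda_1^\omega(x_0,y_0)=\lambda_1^D(\Omega)$ for some $\omega>0$, then $u_D$ saturates the Rayleigh quotient of $H_\omega$, hence (by expanding in the eigenbasis of the discrete-spectrum operator $H_\omega$, where degeneracy is indeed harmless) is a genuine ground state of $H_\omega$; subtracting the two eigenvalue equations gives $\partial_\varphi u_D=0$, and positivity of the Dirichlet ground state then forces rotational symmetry of $\Omega$. This buys you the conclusion for all $\omega>0$ in one stroke, with no perturbation series and no unique continuation. The only point to tighten is the last geometric step: $f$ is defined only through $u_D$ on $\Omega$, so to get the contradiction you should take a maximal open arc of $\{r=r_0\}\cap\Omega$ containing the interior point $q$, note that $u_D\equiv u_D(q)>0$ on that arc since $\partial_\varphi u_D=0$, and let the arc's endpoint (which lies in $\partial\Omega$ because the circle meets $\Omega^c$) be approached along the arc; continuity of $u_D$ up to the boundary of the regular domain $\Omega$, together with $u_D|_{\partial\Omega}=0$, then gives $u_D(q)=0$, a contradiction. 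With that refinement your argument is complete and arguably cleaner than the published one.
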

 %----------------%
\begin{proof}
We employ the form (\ref{H}) of the Hamiltonian. The inequality \eqref{omega-inequality} is easy to show, by minimax principle we have
 %----------------%
$$
\lambda^\omega_1(x_0, y_0) \le (H_\omega(x_0, y_0)u_D,u_D) = \lambda_1^D(\Omega) + (u_D, i\omega ((x-x_0)\partial_y-(y-y_0) \partial_x)u_D)
$$
 %----------------%
but the second term on the right-hand side vanishes because the ground state eigenfunction $u_D$ of $-\Delta_D^\Omega$ can be chosen real-valued.

Suppose next that the system lacks the rotational symmetry. First we will show that $\omega=0$ is a strict maximum of the function $\omega\mapsto \lambda_1^\omega(x_0, y_0)$. For small $\omega$ we have the perturbation expansion
 %----------------%
$$
\lambda_1^\omega(x_0, y_0)=\lambda_1^D +a \omega+ b \omega^2+\mathcal{O}(\omega^3),
$$
 %----------------%
with
 %----------------%
\begin{align}
\label{a} & a=i \int_\Omega \big((x-x_0)(u_D)_y- (y-y_0)(u_D)_x\big) \,\overline{u}_D\,\mathrm{d}x\,\mathrm{d}y, \\ \nonumber
& b=\sum_{k=2}^\infty\frac{1}{(\lambda^D_1(\Omega)-\lambda^D_k(\Omega))} \left|\int_\Omega \big((x-x_0)(u_D)_y-(y-y_0)(u_D)_x\big) \,\overline{u}_{D,k}\,\mathrm{d}x\,\mathrm{d}y\right|^2
\end{align}
 %----------------%
and the natural partial derivative abbreviations, where $\{u_{D,k}:\,k=1,2,\dots\}$ are the eigenfunctions of the Dirichlet Laplacian $-\Delta_D^\Omega$ corresponding to eigenvalues $\lambda_k^D(\Omega)$ and $u_D:= u_{D,1}$. Using again the fact that $u_D$ can be chosen real-valued we find that $a=0$, thus it remains show that $b<0$. Since $\lambda_k^D(\Omega)>\lambda_1^D(\Omega)$ holds for all $k\ge 2$, this would mean
 %----------------%
$$
\int_\Omega \big((x-x_0)(u_D)_y-(y-y_0)(u_D)_x\big) \, \overline{u}_{D,k}\,\mathrm{d}x\,\mathrm{d}y=0.
$$
 %----------------%
This together with (\ref{a}) implies that
 %----------------%
$$
\int_\Omega \left((x-x_0)(u_D)_y-(y-y_0)(u_D)_x\right)\,v\,\mathrm{d}x\,\mathrm{d}y=0
$$
 %----------------%
must hold for any $v\in L^2(\Omega)$, and this in turn is possible if and only if
 %----------------%
$$
(x-x_0)(u_D)_y-(y-y_0)(u_D)_x=0.
$$
 %----------------%
The solution to this first order partial differential equation is
 %----------------%
$$
u_D(x, y)=f((x-x_0)^2+(y-y_0)^2)\quad\;\text{with any} \;\; f\in C^1(\Omega).
$$
 %----------------%
Since $\Omega$ is by assumption neither a disk nor a circular annulus with the center at $(x_0, y_0)$, there must be an open subset of $\Omega$ where $u_D$ vanishes but this contradicts the unique continuation principle for second-order elliptic equations. This proves that $b<0$ and the point $\omega=0$ is a strict maximum.

Assume next that function $\omega\mapsto \lambda_1^\omega(x_0, y_0)$ has another maximum at a point $\omega_0> 0$, then it must have a minimum at some $\omega_1\in (0, \omega_0)$. We are going to show that the corresponding ground state eigenfunction $u_{\omega_1}^{(x_0, y_0)}$ which we for brevity denote as $g$ must satisfy
 %----------------%
\begin{equation}
\label{condition3}
i\int_\Omega \big((x-x_0) g_y(x_0, y_0)- (y-y_0)g_x(x_0, y_0)\big) \overline{g}\,\mathrm{d}x\,\mathrm{d}y=0.
\end{equation}
 %----------------%
If it is not the case, the left-hand side of the last expression is either positive or negative, because as as a matrix element of a self-adjoint operator it is real valued. In the former case we have for any $h<0$
 %----------------%
\begin{align}
\nonumber &\lambda^{\omega_1+h}_1(x_0, y_0)\le \big(H_{\omega_1+h}(x_0, y_0 )g,g\big) \\ \nonumber & = \big(H_{\omega_1}(x_0, y_0 )g,g\big) + i h \int_\Omega \big((x-x_0) g_y- (y-y_0)g_x\big) \overline{g}\,\mathrm{d}x\,\mathrm{d}y \\ \label{max4} & < \big(H_{\omega_1}(x_0, y_0)g,g\big) = \lambda_1^{\omega_1}(x_0, y_0),
\end{align}
 %----------------%
hence $\omega_1$ cannot be a minimum. The analogous argument applies if the expression in question is negative, which establishes the validity of (\ref{condition3}). Combining it with the fact that $H_0(x_0, y_0)= -\Delta_D^\Omega$ and the minimax principle, we get
 %----------------%
$$
\lambda_1^D(\Omega) \le \big(H_0(x_0, y_0 )g,g\big) =  \big(H_{\omega_1}(x_0, y_0)g,g\big)=\lambda_1^{\omega_1}(x_0, y_0).
$$
 %----------------%
This together with the fact that $\omega=0$ is a maximum and $\omega_1$ is a minimum implies that $\lambda_1^{\omega_1}(x_0, y_0)=\lambda_1^D(\Omega)$ which means that the function  $\lambda_1^{\omega}(x_0, y_0)$ has to be constant on the interval $(0,\omega_1)$, however, this is not possible because the origin is a strict maximum. \qed
\end{proof}

Let us now look more closely at the situation when the system has a rotational symmetry and the existence of a sharp maximum is not guaranteed by the previous theorem.
 %----------------%
\begin{example} \label{disk-ex}
Let $\Omega$ be a disk of radius $R$ rotating around its center which we identify with the point $(0,0)$. The spectrum is in this case easy to find being
 %----------------%
\begin{equation} \label{disk-ev}
\lambda_{m,k}(R,\omega) = \frac{j_{m,k}^2}{R^2}-m \omega, \quad m\in\mathbb{Z}, \:k\in \mathbb{N},
\end{equation}
 %----------------%
where $j_{m,k}$ is the $k$th positive zero of Bessel function of the first kind $J_m$. Indeed, the symmetry allows for a separation of variables; we employ polar coordinates with the eigenfunction Ansatz $u(r,\varphi) = f(r) \mathrm{e}^{i m\varphi}$ for $0\le r\le R_0$ and $0\le \varphi<2\pi$. The equation $H_\omega u=\lambda u$ then for a fixed $m\in\mathbb{Z}$ gives
 %----------------%
\begin{equation}\label{Bessel}
-f''(r)-\frac{1}{r}f'(r)+\left(\frac{m^2}{r^2}-\omega m-\lambda\right) f(r)=0
\end{equation}
 %----------------%
and changing the variable to $t=r \sqrt{\lambda+m \omega}$ write the solution as the Bessel function $J_m$. The solution has to belong to $\mathcal{H}^1_0(\Omega)$ and the Dirichlet condition imposed at $r=R$ requires $J_m\left(R_0 \sqrt{\lambda+m \omega}\right)=0$ which yields the spectrum \eqref{disk-ev} corresponding to the eigenfunctions $(r,\varphi) \mapsto J_m\left(\frac{j_{m,k}}{R}r\right) \mathrm{e}^{im\varphi}$.

Let us look what this tells us about $\lambda^\omega_1(x_0, y_0):= \min_{m\in\mathbb{Z},k\in \mathbb{N}} \lambda_{m,k}(R,\omega)$. For $\omega=0$ it is equal to $\lambda_1^D(\Omega) = \frac{j_{0,1}^2}{R^2}$ which also the value of $\lambda_{0,1}(R,\omega)$ for any $\omega$. Since the Bessel functions zeros are conventionally numbered in the ascending order, we have to care about $k=1$ only. For each $m\in \mathbb{N}$ there is positive $\omega_m$ such that for $\omega>\omega_m$ we have $\lambda_{m,k}(R,\omega)<\lambda_1^D(\Omega)$. Furthermore, we know that $j_{m,1}>m$, see e.g. \cite{Br93}, hence $\omega_m > \frac{m}{R^2} - \frac{j_{0,1}^2}{mR^2}$. This means, in particular, that the inequality \eqref{omega-inequality} is indeed not sharp in this case turning into equality for $\omega\in(0,\omega_1)$.

At the same time we have found that the ground state eigenvalue of $H_\omega$ need not be simple in general; in the present case it becomes degenerate for $\omega=\omega_1$ as well at the other values of $\omega$ where the two locally lowest eigenvalue lines $\omega \mapsto \lambda_{m,k}(R,\omega)$ cross.
\end{example}

 %----------------%
\begin{example} \label{annulus-ex}
Let now $\Omega$ be a circular annulus of the radii $0<R_0<R_1$ rotating around the center at $(0,0)$. Mimicking the argument of the previous example we find that its spectrum consists of the eigenvalues
 %----------------%
\begin{equation} \label{annulus-ev}
\lambda_{m,k}(R_0,R_1,\omega) = \lambda_{m,k}(R_0,R_1,0)-m \omega, \quad m\in\mathbb{Z}, \:k\in \mathbb{N},
\end{equation}
 %----------------%
where the first term on right-hand side are Dirichlet eigenvalues of the non-rotating annulus. The latter are obtained as solutions of the spectral condition
 %----------------%
$$
\left| \begin{array}{cc} J_m(R_0\sqrt{\lambda}) & Y_m(R_0\sqrt{\lambda}) \\ J_m(R_1\sqrt{\lambda}) & Y_m(R_1\sqrt{\lambda}) \end{array} \right| = 0
$$
 %----------------%
and the radial parts of eigenfunctions now combine Bessel functions of the first and the second type. As in the previous case, $\lambda^\omega_1(x_0, y_0)$ remains constant for small enough values of $\omega$.
\end{example}

 %----------------%
\begin{remark}
With respect to the topic of the previous section, the ground state maximum for disks and annuli is reached when the rotation respect the symmetry. Was it not the case, the point $\omega=0$ would be by Theorem~\ref{th:omega-inequality} a strict maximum but this is not the case. The annulus thus provides an example of a region where the maximum of $(x_0, y_0) \mapsto \lambda^\omega_1(x_0, y_0)$ lies outside $\Omega$ and the (non-unique) maximum is attained at the inner boundary.
\end{remark}
 %----------------%
 \begin{remark}
The operator \eqref{H} has a natural scaling property: changing $(x,y)$ to $\eta(x,y)$ one has to replace the angular frequency $\omega$ by $\eta^{-2}\omega$ to get the whole spectrum scaled by the factor $\eta^{-2}$. In this sense we can rephrase the conclusion of the examples saying that the ground state is independent of $\omega$ from a fixed finite interval provided the radii are large enough.
\end{remark}

%%%%%%%%%%%%%%%%%%%%%%%%%%%
\section{Domain comparison}
\label{s:disk-comp}

Let us finally consider comparison of spectra for different regions at a fixed angular velocity $\omega$. As we mentioned in the introduction, an optimization \emph{\`{a} la} Faber and Krahn cannot be expected. We are nevertheless able to demonstrate an estimate which in the ground state eigenvalue is compared to that of a disk of the same area. For this purpose we modify the notation and add the index specifying the region writing $H_{\omega, \Omega}(x_0, y_0)$ and $\lambda_{1, \Omega}^\omega(x_0, y_0)$. We restrict our attention to convex regions with a fixed rotation center $(x_0, y_0)\in \Omega$ which we can as in the proof of Theorem~\ref{th:convex} write as
 %----------------%
$$
\Omega = \big\{ (x_0+r\cos\varphi,y_0+r\sin\varphi)\,:\; \varphi\in [0,2\pi),\;r\in [0,R(\varphi)) \big\}
$$
 %----------------%
for a suitable $2\pi$-periodic function $R$. Then we have the following result:

 %----------------%
\begin{theorem}\label{th:revFK}
Suppose that $\pi R_0^2= |\Omega|$ and denote by $B$ the disk of radius  $R_0$, then
 %----------------%
\begin{align}\label{FKreverse}
& \lambda_{1, \Omega}^\omega(x_0, y_0) \\ & \le  \lambda_{1, B}^\omega(0, 0)+\left(\int_0^{2\pi} \left(\frac{R'(\varphi )}{R(\varphi)}\right)^2\,\mathrm{d}\varphi\right)\, \mathrm{sup}_{0\le m\le \frac{R_0^2 \omega +\sqrt{R_0^4 \omega^2 +4j_{0,1}^2}}{2}}\frac{j_{m,1}^2-m^2}{2 \pi R_0^2}. \nonumber
\end{align}
 %----------------%
\end{theorem}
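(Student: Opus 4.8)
The plan is to feed the minimax principle with a trial function transplanted from the disk, using the polar form \eqref{Hpolar}, $H_{\omega,\Omega}(x_0,y_0)=-\Delta_D^\Omega+i\omega\partial_\varphi$, with the angle measured from $(x_0,y_0)$. Let $u_B(\rho,\varphi)=f(\rho)\,\mathrm{e}^{im\varphi}$ be the normalized ground state of $H_{\omega,B}(0,0)$; by \eqref{disk-ev} we have $f(\rho)=J_m(j_{m,1}\rho/R_0)$, where $m=m^\ast\ge 0$ is the angular momentum realizing the disk minimum $\lambda_{1,B}^\omega(0,0)=j_{m,1}^2/R_0^2-m\omega$. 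I would take as competitor $u_\Omega(r,\varphi):=u_B(\rho,\varphi)$ with $\rho=R_0\,r/R(\varphi)$. This purely radial rescaling maps $\Omega$ onto $B$ fibrewise in $\varphi$, carries $\partial\Omega=\{r=R(\varphi)\}$ to $\{\rho=R_0\}$ so that $u_\Omega\in\mathcal H^1_0(\Omega)$, and for convex $\Omega$ with $(x_0,y_0)$ in the interior the function $R$ is Lipschitz and bounded below, so the factor $\int_0^{2\pi}(R'/R)^2\,\mathrm{d}\varphi$ is finite (if it were not, the asserted bound would be vacuous).

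Next I would insert $u_\Omega$ into the Rayleigh quotient and change variables back to $B$. Writing $\beta:=R'/R$, the chain rule gives $\partial_r u_\Omega=(R_0/R)\,\partial_\rho u_B$ and $\partial_\varphi u_\Omega=\partial_\varphi u_B-\beta\rho\,\partial_\rho u_B$, while $r\,\mathrm{d}r\,\mathrm{d}\varphi=(R^2/R_0^2)\,\rho\,\mathrm{d}\rho\,\mathrm{d}\varphi$. Two cancellations then occur and constitute the heart of the argument. First, since $\int_0^{2\pi}R^2\,\mathrm{d}\varphi=2|\Omega|=2\pi R_0^2$, the angular weight $R^2/R_0^2$ averages to $1$, so that $\|u_\Omega\|_{L^2(\Omega)}=\|u_B\|_{L^2(B)}$; moreover the piece of the rotation term proportional to $\beta R^2=\tfrac12(R^2)'$ integrates to zero as a total $\varphi$-derivative, leaving the rotation contribution equal to $-\omega m\|u_B\|^2$. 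Second, because $f$ is real the Dirichlet cross term $\int(2\beta/\rho)\,\mathrm{Re}\big(\overline{\partial_\varphi u_B}\,\partial_\rho u_B\big)$ vanishes identically. After these cancellations the numerator reduces to $(H_{\omega,B}u_B,u_B)+\big(\int_0^{2\pi}\beta^2\,\mathrm{d}\varphi\big)\int_0^{R_0}f'(\rho)^2\rho\,\mathrm{d}\rho$, whence
\[
\lambda_{1,\Omega}^\omega(x_0,y_0)\le\lambda_{1,B}^\omega(0,0)+\left(\int_0^{2\pi}\left(\frac{R'}{R}\right)^2\mathrm{d}\varphi\right)\frac{\int_0^{R_0}f'(\rho)^2\rho\,\mathrm{d}\rho}{2\pi\int_0^{R_0}f(\rho)^2\rho\,\mathrm{d}\rho}.
\]

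It remains to estimate the radial ratio and to eliminate the unknown $m=m^\ast$. Using $-\Delta u_B=(j_{m,1}^2/R_0^2)u_B$ and integrating by parts, $\int_0^{R_0}f'^2\rho\,\mathrm{d}\rho=(j_{m,1}^2/R_0^2)\int_0^{R_0}f^2\rho\,\mathrm{d}\rho-m^2\int_0^{R_0}f^2\rho^{-1}\,\mathrm{d}\rho$; the elementary pointwise bound $\rho^{-1}\ge\rho/R_0^2$ on $(0,R_0]$ then yields $\int_0^{R_0}f'^2\rho\,\mathrm{d}\rho\le(j_{m,1}^2-m^2)R_0^{-2}\int_0^{R_0}f^2\rho\,\mathrm{d}\rho$, so the radial ratio is at most $(j_{m,1}^2-m^2)/(2\pi R_0^2)$. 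Finally, since $\lambda_{m,1}(R_0,\omega)\le\lambda_{0,1}(R_0,\omega)=j_{0,1}^2/R_0^2$ and $j_{m,1}>m$ as recalled in Example~\ref{disk-ex}, the optimal angular momentum obeys $m^2-\omega R_0^2m-j_{0,1}^2<0$, hence $0\le m\le\big(R_0^2\omega+\sqrt{R_0^4\omega^2+4j_{0,1}^2}\big)/2$; replacing $(j_{m,1}^2-m^2)/(2\pi R_0^2)$ by its supremum over this band delivers \eqref{FKreverse}.

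The main obstacle is not any single estimate but arranging the transplantation so that everything except the advertised terms cancels. One must choose the radial rescaling precisely — rather than, say, an area-preserving map — so that the $L^2$ norm and the rotation term survive unchanged while the angular weight $R^2/R_0^2$ disappears upon integration over $\varphi$, and one must exploit the reality of the Bessel profile to kill the gradient cross term. The secondary subtlety is that $m^\ast$ is genuinely unknown, since the disk ground state need not be the $s$-wave once $\omega$ is large; the estimate must therefore be made uniform over the admissible range of angular momenta, which is exactly what produces the supremum in \eqref{FKreverse}.
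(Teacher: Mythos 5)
Your proposal is correct and follows essentially the same route as the paper: the paper substitutes $r=tR(\varphi)$, $t\in[0,1]$, into the quadratic form and inserts the disk ground state $f(R_0t)\,\mathrm{e}^{im\varphi}$, which is exactly your radial transplantation $\rho=R_0r/R(\varphi)$, and it then uses the same cancellations (reality of $f$ killing the gradient cross term, $\int_0^{2\pi}R R'\,\mathrm{d}\varphi=0$ killing the extra rotation piece, $\int_0^{2\pi}R^2\,\mathrm{d}\varphi=2\pi R_0^2$ normalizing the $-\omega m$ term), the same radial bound $\int_0^{R_0}\rho f'^2\,\mathrm{d}\rho\le (j_{m,1}^2-m^2)R_0^{-2}\int_0^{R_0}\rho f^2\,\mathrm{d}\rho$, and the same band $m\le\tfrac12\bigl(R_0^2\omega+\sqrt{R_0^4\omega^2+4j_{0,1}^2}\bigr)$ obtained from $j_{m,1}>m$.
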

 %----------------%
\begin{proof}
It is obvious that without loss of generality we may put $x_0=y_0=0$. The quadratic form corresponding to $H_{\omega, \Omega}(0, 0)$ is
 %----------------%
\begin{equation}\label{form}
Q(H_{\omega, \Omega}(0, 0))(u)= \int_0^{2\pi} \int_0^{R(\varphi)} r \left(|u_r|^2+\frac{1}{r^2}|u_\varphi|^2+i \omega u_\varphi \overline{u}\right)\,\mathrm{d}r\,\mathrm{d}\varphi.
\end{equation}
 %----------------%
We are going to pass to new coordinates analogous to those used in \cite{AFK16}) changing the radial one to $r=t R(\varphi)$ with $t\in[0,1]$. This allows to rewrite the form (\ref{form}) as
 %----------------%
\begin{align}\nonumber
Q(H_{\omega, \Omega}(0, 0))(u) &= \int_0^{2\pi} \int_0^1 R^2(\varphi) t \biggl(\frac{1}{R^2(\varphi)}|v_t|^2+\frac{1}{R^2(\varphi) t^2}\left|v_\varphi-\frac{R'(\varphi) t}{R(\varphi)} v_t\right|^2 \\ \label{new form} & \quad +i\omega \left(v_\varphi- \frac{R'(\varphi ) t}{R(\varphi)} v_t\right) \overline{v}\biggr)\,\mathrm{d}t\,\mathrm{d}\varphi,
\end{align}
 %----------------%
where $v(t,\varphi):=u(R(\varphi)t, \varphi)$.

According to Example~\ref{disk-ex} the the eigenfunctions of $H_{\omega,B}(0,0)$ are obtained through solution of equation \eqref{Bessel} with $\lambda=\lambda_{1,B}^\omega(0,0)$ and $m\in\mathbb{Z}$, in particular, the normalized ground state eigenfunction, looks in the new coordinates like
 %----------------%
$$
v(t,\varphi) = \left(2\pi \int_0^{R_0} z f^2(z)\,\mathrm{d}z\right)^{-1/2}f(R_0 t)\,\mathrm{e}^{im\varphi}
$$
 %----------------%
for some $m$, where it is obviously sufficient to consider $m\in\mathbb{N}\cup\{0\}$. Substituting it into \eqref{new form}, we get from the minimax principle
 %----------------%
\begin{align}\nonumber
& \lambda_{1, \Omega}^\omega(0, 0) \le \left(2\pi \, \int_0^{R_0} z f^2(z)\,\mathrm{d}z\right)^{-1}\int_0^{2\pi} \int_0^1 R^2(\varphi) t \biggl(\frac{R_0^2}{R^2(\varphi)}(f'(R_0 t))^2 \\ \nonumber & \qquad +\frac{1}{R^2(\varphi) t^2}\left|i m f(R_0 t)-\frac{R'(\varphi ) t R_0}{R(\varphi)} f'(R_0 t)\right|^2 \\ \nonumber & \qquad +i \omega \left(i m f(R_0 t)- \frac{R_0 R'(\varphi ) t}{R(\varphi)} f'(R_0 t)\right) f(R_0 t)\biggr)\,\mathrm{d}t\,\mathrm{d}\varphi \\ \nonumber &
= \left(2\pi \, \int_0^{R_0} z f^2(z)\,\mathrm{d}z\right)^{-1}\int_0^{2\pi} \int_0^1 \biggl( t R_0^2 (f'(R_0 t))^2+ \frac{m^2}{t} f^2(R_0 t) \\ \label{form1} & \qquad +\left(\frac{R'(\varphi )}{R(\varphi)}\right)^2 t R_0^2 (f'(R_0 t))^2- \omega  m t R^2(\varphi) f^2(R_0 t)\biggr)\,\mathrm{d}t\,\mathrm{d}\varphi,
\end{align}
 %----------------%
because the integral of the last term in the middle expression vanishes. Returning to the original radial variable, it is easy to see that right-hand side of the last estimate coincides with
 %----------------%
$$
\lambda_{1, B}^\omega(0, 0)+\frac{1}{2\pi}\int_0^{2\pi} \left(\frac{R'(\varphi )}{R(\varphi)}\right)^2\,\mathrm{d}\varphi\, \, \int_0^{R_0} z (f'(z))^2\,\mathrm{d}z\,\left(\int_0^{R_0} z f^2(z)\,\mathrm{d}z\right)^{-1}.
$$
 %----------------%
Next we recall that $f$ satisfies equation \eqref{Bessel}. Multiplying it by $rf$, integrating by parts over $[0,R_0]$, and using the fact that $\lambda_{1,B}^\omega(0,0)$ is given by \eqref{disk-ev} we get
 %----------------%
\begin{equation}\label{end}
\int_0^{R_0} r (f'(r))^2\,\mathrm{d}r= \int_0^{R_0} r \left(\frac{(j_{m,1}^2}{R_0^2}- \frac{m^2}{r^2}\right) f^2(r)\,\mathrm{d}r \le \frac{j_{m,1}^2- m^2}{R_0^2}
\end{equation}
 %----------------%
if we choose the function properly normalized, $\|f\|_{L^2((0,R_0), r\mathrm{d}r)}=1$. It remains to estimate the fraction at the right-hand side; following Example~\ref{disk-ex} we have to care about those values of $m$ for which
 %----------------%
$$
\frac{(j_m^1)^2}{R_0^2}- m \omega\le \frac{(j_0^1)^2}{R_0^2}
$$
 %----------------%
holds. Since $j_{m,1}>m$ as mentioned there we conclude that $m$ should not exceed $\frac12\big(R_0^2 \omega +\sqrt{R_0^4 \omega^2+4j_{0,1}^2}\big)$.
This together with (\ref{form1}) and (\ref{end}) establishes the claim of the theorem. \qed
\end{proof}

Let us make a couple of comments on the obtained bound. It is clear that the inequality is saturated if and only if $\Omega$ is a disk. Asking how tight the bound is generally we note that the right-hand side of (\ref{FKreverse}) behaves for large values of $\omega$ as
 %----------------%
\begin{equation}\label{behavior*}
\lambda_{1, B}^\omega(0, 0)+ \mathcal{O}\big(R_0^{2/3}\omega^{4/3}\,\big).
\end{equation}
 %----------------%
Indeed, the behavior of $j_{m,k}$ for a fixed $k$ and large $m$ is known \cite{Tr49}, in particular, for $k=1$ we have $j_m^1= m+ \mathcal{O}(m^{1/3})$, see \cite[Sec.~9.5.14]{AS72}, hence for large values of $m$ not exceeding  $\frac12\big(R_0^2 \omega +\sqrt{R_0^4 \omega^2+4(j_{0,1}^2}\big)$ one has
 %----------------%
$$
\frac{(j_m^1)^2- m^2}{R_0^2}=\mathcal{O}\big(R_0^{2/3}\omega^{4/3}\,\big),
$$
 %----------------%
which establishes (\ref{behavior*}). Furthermore, \eqref{FKreverse} also implies that
 %----------------%
\begin{equation}\label{spinning}
\lambda_{1, \Omega}^\omega(x_0, y_0)\to -\infty \quad\;\text{as}\;\; \omega\to \infty.
\end{equation}
 %----------------%
To see that it is enough to show that the right-hand side of \eqref{FKreverse} has the same property. Using the result of Example~\ref{disk-ex} in combination with minimax principle and the above asymptotic behavior, we get
 %----------------%
$$
\lambda_{1,B}^\omega(0,0) \le \frac{j_{\left[\frac12 R_0^2 \omega\right],1}^2}{R_0^2}- \left[\frac{1}{2}R_0^2 \omega\right]\omega \le
-\frac{R_0^2 \omega^2}{4}\big(1+o(1)\big)+ \mathcal{O}(\omega^{4/3}),
$$
 %----------------%
which proves our claim.

%%%%%%%%%%%%%%%%%%%%%%%%%%%%
\section{Concluding remarks}
\label{s:concl}

The results derived in the previous sections leave various questions open. Let us mention some of them:
 %----------------%
\begin{itemize}
\item  concerning the inner problem mentioned in Remark~\ref{inner}, under which conditions is an extremum of $\lambda_1^\omega$ occurring at a boundary of the region $\Omega$, non-convex in case of a maximum, unique?
 %----------------%
\item  how does the right-hand side of \eqref{FKreverse} behave with respect to the position of the rotation center? The polar coordinate parametrization is possible with respect to any $(x_0, y_0)\in \Omega$ and the integral in the second term depends on this point continuously, hence there a -- possibly non-unique -- point where it attains its minimum value. What is such a point for a non-circular $\Omega$ and what is its relation to the center of mass of the region?
 %----------------%
\item   is there a nontrivial class of $\Omega$ of a fixed area for which a maximizer of $\lambda_1^\omega$ exists, and in such a case, how it looks like?
\end{itemize}
 %----------------%
The ground state is not the only spectral information about the operator \eqref{H}:   
  %----------------%
\begin{itemize}
\item another point of interest is the distribution of the eigenvalues. It is clear that the Weyl asymptotics \eqref{weyl} cannot be strengthened to a inequality valid for all $n$ in the spirit of P\'olya's conjecture; the latter is known to be violated for magnetic Dirichlet Laplacians \cite{FLW09} and since the second term on the right-hand side of \eqref{equivalent} is negative, the same would be true in our case
\item  instead one may try to find the asymptotics beyond the leading term. In the non-rotating case the second term of the counting function is determined by the length of the boundary,
     %----------------%
      $$
     N(\lambda) = \frac{\lambda}{4\pi}\,|\Omega| - \frac{\sqrt{\lambda}}{8\pi}\,|\partial\Omega| + o(\sqrt{\lambda}),
    $$
     %----------------%
    provided the boundary is smooth \cite{Iv80}; the question is for which class of $\Omega$ this remains true in the presence of rotation
 %----------------%
\item no less interesting are properties of the eigenfunctions. Although they are complex in general, they may have nodal lines. In Example~\ref{disk-ex}, for instance, the eigenfunction $(r,\varphi) \mapsto J_m\left(\frac{j_{m,k}}{R}r\right) \mathrm{e}^{im\varphi}$ vanishes at the circles of radii $r_i = \frac{j_{m,i}}{j_{m,k}}R\,,\: k=1,\dots,k-1$. However, typically the eigenfunctions are expected to vanish at isolated points with the characteristic vortex behavior in their vicinity; one is interested in their number and distribution
\end{itemize}
 %----------------%
Possible generalizations of our present problem include
 %----------------%
\begin{itemize}
\item  the case when the Hamiltonian contains an additional potential and/or the condition at the boundary of $\Omega$ are altered. Of a particular interest are strongly attractive Robin conditions which make the eigenfunctions localized in the vicinity of the boundary
 %----------------%
\item the analogous problem in higher dimensions when the point $(x_0,y_0)$ is replaced an axis around which the corresponding $\Omega$ rotates
\end{itemize}
 %----------------%
We prefer to stop here being convinced that the reader can ask other questions inspired by the problem discussed in this paper.

\section*{Acknowledgements}

The work is supported by the Czech Science Foundation (GA\v{C}R) within the project 17-01706S and by the EU project CZ.02.1.01/0.0/0.0/16\textunderscore 019/0000778.  D.B. thanks A.Khrabustovskyi and V.Lotoreichik for useful discussions, P.E. for the hospitality in Institut Mittag-Leffler where a part of this work was done.

\end{document}